\newtheorem{theorem}{Theorem}[section]
\newtheorem{proposition}[theorem]{Proposition}
\newtheorem{corollary}[theorem]{Corollary}
\theoremstyle{definition}
\newtheorem{definition}[theorem]{Definition}
\theoremstyle{remark}
\newtheorem{remark}[theorem]{Remark}
\numberwithin{equation}{section}
\begin{document}

\setcounter{page}{1}

\title[Numerical radius orthogonality in $C^*$-algebras]
{Numerical radius orthogonality in $C^*$-algebras}

\author[A. Zamani and Pawe\l{} W\'ojcik]
{Ali Zamani$^{1,*}$ and Pawe\l{} W\'ojcik$^2$}

\address{$^*$ Corresponding author, $^1$Department of Mathematics, Farhangian University, Tehran, Iran}
\email{zamani.ali85@yahoo.com}

\address{$^2$ Institute of Mathematics, Pedagogical University of Cracow, Podchor\c a\.zych~2, 30-084 Krak\'ow, Poland}
\email{pawel.wojcik@up.krakow.pl}

\subjclass[2010]{46L05; 47A12; 46B20; 46C50.}

\keywords{Birkhoff–-James orthogonality; $C^*$-algebra; numerical radius; state.}
\begin{abstract}
In this paper we characterize the Birkhoff--James orthogonality with respect
to the numerical radius norm $v(\cdot)$ in $C^*$-algebras.
More precisely, for two elements $a, b$ in a $C^*$-algebra $\mathfrak{A}$,
we show that $a\perp_{B}^{v} b$ if and only if for each $\theta \in [0, 2\pi)$,
there exists a state $\varphi_{_{\theta}}$ on $\mathfrak{A}$ such that
$|\varphi_{_{\theta}}(a)| = v(a)$ and $\mbox{Re}\big(e^{i\theta}\overline{\varphi_{_{\theta}}(a)}\varphi_{_{\theta}}(b)\big)\geq 0$.
Moreover, we compute the numerical radius derivatives in $\mathfrak{A}$.
In addition, we characterize when the numerical radius norm of the sum of two (or three)
elements in $\mathfrak{A}$ equals the sum of their numerical radius norms.
\end{abstract} \maketitle
\section{Introduction and Preliminaries}\label{s1}
Throughout this paper, let $\mathfrak{A}$ be a unital $C^*$-algebra with unit denoted by $e$,
and $\mathbb{B}(\mathcal{H})$ be the $C^*$-algebra of all bounded linear operators on a complex
Hilbert space $\big(\mathcal{H}, \langle \cdot, \cdot\rangle\big)$.
We denote by $\mathfrak{A}'$ the dual space of $\mathfrak{A}$.
A linear functional $\varphi \in \mathfrak{A}'$ is said to be {\it positive},
and write $\varphi \geq 0$, if $\varphi(a^*a) \geq 0$ for all $a\in \mathfrak{A}$.
By $\mathcal{S}(\mathfrak{A})$ we denote the set of all normalized states of $\mathfrak{A}$, that is,
\begin{align*}
\mathcal{S}(\mathfrak{A}) = \big\{\varphi \in \mathfrak{A}':\,
\varphi \geq 0 \quad \mbox{and} \quad \varphi (e) = \|\varphi\| = 1\big\}.
\end{align*}
The {\it numerical range of an element} $a\in \mathfrak{A}$ is
$V(a) = \big\{\varphi(a): \, \varphi \in \mathcal{S}(\mathfrak{A})\big\}$.
It is a nonempty compact and convex set of the complex plane $\mathbb{C}$,
and its maximum modulus is the {\it numerical radius} $v(a)$ of $a$, that is,
\begin{align*}
v(a) = \sup\big\{|\xi|: \, \xi \in V(a)\big\}.
\end{align*}
This definition generalizes the classical numerical radius in the sense
that the numerical radius $v(A)$ of a Hilbert space operator $A$
(considered as an element of a $C^*$-algebra $\mathbb{B}(\mathcal{H})$)
coincides with classical numerical radius
\begin{align*}
w(A) = \sup\big\{|\langle Ax, x\rangle|: \, x\in \mathcal{H},\|x\| = 1\big\}.
\end{align*}
It is well known that $v(\cdot)$ defines a norm on $\mathfrak{A}$, which is equivalent
to the $C^*$-norm $\|\cdot\|$.
In fact, the following inequalities hold for every $a\in \mathfrak{A}$:
\begin{align}\label{I.1.1}
\frac{1}{2}\|a\| \leq v(a)\leq \|a\|.
\end{align}
For more material about the numerical radius, we refer the reader to \cite{B.D} and \cite{G.R}.

The usual way to define the orthogonality in $\mathfrak{A}$
is by means of the $C^*$-valued inner product: for elements $a, b$ of $\mathfrak{A}$ we say that $a$ is
orthogonal to $b$, and we write $a\perp b$, if $a^*b = 0$.
Another concept of orthogonality in $\mathfrak{A}$ is the Birkhoff--James orthogonality (see \cite{Bi,J.3}).
Recall that, an element $a\in \mathfrak{A}$ is said to be {\it Birkhoff--James orthogonal} to another element
$b\in \mathfrak{A}$, in short
$a\perp_{B} b$, if $\|a+\lambda b\|\geq\|a\|$ for all $\lambda\in\mathbb{C}$.

As a natural generalization of the notion of
Birkhoff--James orthogonality in $C^*$-algebras,
the concept of strong Birkhoff–-James orthogonality
was introduced in \cite{A.R.1}.
When $a$ and $b$ are elements of $\mathfrak{A}$, $a$ is orthogonal to $b$ in
the strong Birkhoff--James sense, in short $a\perp_{B}^{s} b$,
if $\|a + bc\|\geq\|a\|$ for all $c\in\mathfrak{A}$.

The characterizations of the (strong) Birkhoff–-James orthogonality for elements of a
$C^*$-algebra by means of the states are known.
For elements $a, b$ of $\mathfrak{A}$ the following results were obtained in \cite{A.R.1,A.R.2,B.G}:
\begin{align*}
a\perp_{B} b \Leftrightarrow \big(\exists \,\varphi \in \mathcal{S}(\mathfrak{A}):
\varphi(a^*a)=\|a\|^2 \,\,\mbox{and}\,\, \varphi(a^*b)=0\big)
\end{align*}
and
\begin{align*}
a\perp_{B}^{s} b \Leftrightarrow \big(\exists \,\varphi \in \mathcal{S}(\mathfrak{A}):
\varphi(a^*a)=\|a\|^2 \,\,\mbox{and}\,\, \varphi(a^*bb^*a)=0\big).
\end{align*}

In the next section, inspired by the numerical radius parallelism
in \cite{Z.1}, we discuss the Birkhoff--James orthogonality with
respect to the numerical radius norm in $\mathfrak{A}$.
We show that this relation can be characterized
in terms of states acting on $\mathfrak{A}$ (Theorem \ref{T.7.2}).
Some other related results are also discussed.
Particularly, we prove that $v(a + b) = v(a) + v(b)$ if and only if
there exists a state $\varphi$ on $\mathfrak{A}$ such that
$\overline{\varphi(a)} \varphi(b) = v(a)v(b)$.
In addition, we compute the numerical radius derivatives
in $\mathfrak{A}$ (Theorem \ref{T.norm.der}).
\section{Main Results}\label{s2}
We start our work with the following definition.
\begin{definition}\label{D.1.2}
An element $a\in\mathfrak{A}$ is called
the {\it numerical radius Birkhoff–-James orthogonal} to another
element $b \in\mathfrak{A}$, denoted by $a\perp_{B}^{v} b$, if
$v(a + \lambda b) \geq v(a)$ for all $\lambda\in\mathbb{C}$.
\end{definition}
Notice that the relations $\perp_{B}$ and $\perp_{B}^{v}$ are not comparable, in general.
As an example, one can take the
$C^*$-algebra $\mathfrak{A}$ of all complex $2\times 2$ matrices and let
$a = \begin{bmatrix}
i & 1 \\
0 & i
\end{bmatrix}$,
$b = \begin{bmatrix}
-2i & 0 \\
1 + \sqrt{5} & 0
\end{bmatrix}$,
$c = \begin{bmatrix}
0 & 0 \\
i & 0
\end{bmatrix}$,
and
$d = \begin{bmatrix}
1 & 0 \\
1 & 2
\end{bmatrix}$.
Then simple computations show that $a\perp_{B}b$ but $a \not\perp_{B}^{v}b$
and also, $c\perp_{B}^{v}d$ but $c \not\perp_{B}d$.

Note that these relations are coincident for certain elements in $C^*$-algebras.
For example, if $a\in\mathfrak{A}$ is normal, then $v(a) = \|a\|$ (see \cite[p. 44]{B.D})
and hence the condition $a\perp_{B}^{v} b$
implies $a\perp_{B} b$ for all $b\in\mathfrak{A}$. Indeed, for every $\lambda\in \mathbb{C}$,
by (\ref{I.1.1}), we have
\begin{align*}
\|a + \lambda b\| \geq v(a + \lambda b) \geq v(a) = \|a\|.
\end{align*}
Furthermore, if $a^2 = 0$, then by \cite[Corollary 2.5]{Z.1} $v(a) = \frac{1}{2}\|a\|$
and so the condition $a\perp_{B}b$
implies $a\perp_{B}^{v} b$ for all $b\in\mathfrak{A}$. Indeed, for every $\lambda\in \mathbb{C}$,
again by (\ref{I.1.1}) it follows that
\begin{align*}
v(a + \lambda b) \geq \frac{1}{2}\|a + \lambda b\| \geq \frac{1}{2}\| a\| = v(a).
\end{align*}
\begin{remark}
Let $a\in \mathfrak{A}$.
Define $f: \mathfrak{A}'\longrightarrow \mathbb{C}$ by the formula $f(\varphi) = \varphi(a)$.
Then the function $f$ is weak*-continuous. Therefore the function $g: \mathfrak{A}'\longrightarrow \mathbb{C}$,
given by $g(\varphi): = |f(\varphi)| = |\varphi(a)|$, is also weak*-continuous.
Moreover, the set of normalized states $\mathcal{S}(\mathfrak{A})$ is weak*-compact.
Since the function $g$ is weak*-continuous, it follows that the function
$g|_{_{\mathcal{S}(\mathfrak{A})}}: \mathcal{S}(\mathfrak{A})\longrightarrow \mathbb{C}$
attains its maximum. Therefore, for a given element $a\in \mathfrak{A}$ there is $\varphi \in\mathcal{S}(\mathfrak{A})$
such that $|\varphi(a)| = v(a)$.
\end{remark}
The following proposition states some basic properties of the relation $\perp_{B}^{v}$.
\begin{proposition}\label{P.2.2}
Let $a, b\in \mathfrak{A}$.
Then the following statements are equivalent:
\begin{itemize}
\item[(i)] $a\perp_{B}^{v} b$.
\item[(ii)] $a^*\perp_{B}^{v} b^*$.
\item[(iii)] $\alpha a\perp_{B}^{v} \beta b$ for all $\alpha, \beta \in \mathbb{C}$.
\item[(iv)] $ac\perp_{B}^{v} bc$ for every unitary element $c$ in the center of $\mathfrak{A}$.
\end{itemize}
If $a, b$ are self-adjoint, then each one of these assertions is also equivalent to
\begin{itemize}
\item[(v)] $v(a + rb)\geq v(a)$ for all $r \in \mathbb{R}$.
\end{itemize}
\end{proposition}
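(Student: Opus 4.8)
The plan is to prove the cycle $(i)\Leftrightarrow(ii)\Leftrightarrow(iii)\Leftrightarrow(iv)$ first, and then, under the self-adjointness hypothesis, $(i)\Leftrightarrow(v)$. The first two equivalences are essentially bookkeeping. Since every state $\varphi$ satisfies $\varphi(x^*)=\overline{\varphi(x)}$, we get $v(x^*)=v(x)$ for all $x\in\mathfrak{A}$, hence $v(a^*+\mu b^*)=v\big((a+\overline{\mu}\,b)^*\big)=v(a+\overline{\mu}\,b)$; as $\mu$ ranges over $\mathbb{C}$ so does $\overline{\mu}$, which gives $(i)\Leftrightarrow(ii)$. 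For $(i)\Leftrightarrow(iii)$, the implication $(iii)\Rightarrow(i)$ is the case $\alpha=\beta=1$, while for $(i)\Rightarrow(iii)$ the cases $\alpha=0$ or $\beta=0$ are immediate from $v\geq 0$, and when $\alpha,\beta\neq 0$ absolute homogeneity of the norm $v$ gives $v(\alpha a+\lambda\beta b)=|\alpha|\,v\big(a+\tfrac{\lambda\beta}{\alpha}b\big)\geq|\alpha|\,v(a)=v(\alpha a)$, the inequality being $(i)$ applied to the parameter $\lambda\beta/\alpha$, which ranges over all of $\mathbb{C}$ as $\lambda$ does.

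The main step is $(i)\Leftrightarrow(iv)$, and for it I would first isolate the lemma: \emph{if $c$ is a unitary in the centre of $\mathfrak{A}$, then $v(zc)=v(z)$ for every $z\in\mathfrak{A}$}. Since $\varphi\mapsto\varphi(z)$ is an affine, weak*-continuous function on the weak*-compact convex set $\mathcal{S}(\mathfrak{A})$, and $\mathcal{S}(\mathfrak{A})$ is the weak*-closed convex hull of its set of pure states (Krein--Milman), convexity of $|\cdot|$ yields $v(z)=\sup\{|\psi(z)|:\psi\text{ a pure state of }\mathfrak{A}\}$. For a pure state $\psi$ the GNS representation $\pi_\psi$ is irreducible, so its commutant consists of scalar multiples of the identity; as $c$ is central, $\pi_\psi(c)$ lies in that commutant, hence $\pi_\psi(c)=\gamma_\psi I$ with $|\gamma_\psi|=1$ (because $c$ is unitary). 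Therefore $\psi(zc)=\gamma_\psi\,\psi(z)$, so $|\psi(zc)|=|\psi(z)|$; taking suprema over pure states proves the lemma. With the lemma in hand, for each central unitary $c$ and each $\lambda\in\mathbb{C}$ we have $v(ac+\lambda bc)=v\big((a+\lambda b)c\big)=v(a+\lambda b)$ and $v(ac)=v(a)$, so $ac\perp_{B}^{v}bc\Leftrightarrow a\perp_{B}^{v}b$, which yields $(iv)\Leftrightarrow(i)$. This lemma is the one genuinely delicate point: for a non-pure state $\varphi$ the operator $\pi_\varphi(c)$ need not be a scalar, so the reduction to pure states---via Krein--Milman and the affine weak*-continuity of $\varphi\mapsto\varphi(z)$---cannot be bypassed and is where care is needed; everything else is routine.

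Finally assume $a,b$ are self-adjoint. Then $(i)\Rightarrow(v)$ is trivial, by restricting $\lambda$ to $\mathbb{R}$. For $(v)\Rightarrow(i)$, fix $\lambda\in\mathbb{C}$ and set $r=\mathrm{Re}\,\lambda$; then $\mathrm{Re}(a+\lambda b)=a+rb$ is self-adjoint, and for any $x\in\mathfrak{A}$ one has $v(x)\geq\|\mathrm{Re}(x)\|$ because $\|\mathrm{Re}(x)\|=\sup_{\varphi\in\mathcal{S}(\mathfrak{A})}|\mathrm{Re}\,\varphi(x)|\leq\sup_{\varphi\in\mathcal{S}(\mathfrak{A})}|\varphi(x)|=v(x)$. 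Using $v(h)=\|h\|$ for self-adjoint $h$, we obtain
\begin{align*}
v(a+\lambda b)\ \geq\ \|\mathrm{Re}(a+\lambda b)\|\ =\ \|a+rb\|\ =\ v(a+rb)\ \geq\ v(a),
\end{align*}
the last step being $(v)$. Hence $a\perp_{B}^{v}b$, completing $(v)\Leftrightarrow(i)$ and the proof.
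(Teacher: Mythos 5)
Your proof is correct, and it diverges from the paper's in the two places that actually require an argument. For the implication (i)$\Rightarrow$(iv), the paper simply quotes the identity $v(dc)=v(d)$ for a central unitary $c$ from the proof of Theorem~3.4 of the cited paper of Zamani, whereas you reprove it from scratch: reduce the supremum defining $v$ to pure states via Krein--Milman and the convexity of $\varphi\mapsto|\varphi(z)|$, then use irreducibility of the GNS representation of a pure state to see that $\pi_\psi(c)$ is a unimodular scalar, so $|\psi(zc)|=|\psi(z)|$. This is a legitimate, self-contained substitute for the citation (and your remark that the reduction to pure states cannot be skipped is exactly right, since $\pi_\varphi(c)$ need not be scalar for a mixed state). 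For (v)$\Rightarrow$(i) the paper picks a state $\psi$ attaining $v(a+tb)$ with $t=\mathrm{Re}\,\lambda$ and computes $|\psi(a+\lambda b)|^2=|\psi(a+tb)|^2+|s\psi(b)|^2\geq v^2(a+tb)$ using that states take real values on self-adjoint elements; you instead use the chain $v(a+\lambda b)\geq\|\mathrm{Re}(a+\lambda b)\|=\|a+rb\|=v(a+rb)\geq v(a)$, resting on $v(x)\geq\|\mathrm{Re}(x)\|$ and on $v(h)=\|h\|$ for self-adjoint $h$. Both arguments are valid; the paper's is more elementary (only states and the triangle inequality for moduli), while yours is shorter but imports the equality of $v$ and $\|\cdot\|$ on self-adjoint elements. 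The remaining equivalences (i)$\Leftrightarrow$(ii), (i)$\Leftrightarrow$(iii) and (iv)$\Rightarrow$(i) are handled essentially as in the paper.
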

\begin{proof}
It is a basic fact that the norm $v(\cdot)$ is self-adjoint (i.e., $v(c^*) = v(c)$ for
every $c\in \mathfrak{A}$) and so the equivalence (i)$\Leftrightarrow $(ii) is trivial.
The equivalence (i)$\Leftrightarrow$(iii)
immediately follows from the definition of the relation $\perp_{B}^{v}$.
The implication (iv)$\Rightarrow $(i) is also trivial.
It is therefore enough to prove the implication (i)$\Rightarrow $(iv).

Suppose that (i) holds. Let $c$ be a unitary element in the center of $\mathfrak{A}$.
By the first part of the proof of \cite[Theorem 3.4]{Z.1},
it follows that $v(dc) = v(d)$ for all $d \in \mathfrak{A}$.
So, we conclude that
\begin{align*}
v(ac + \lambda bc) = v\big((a + \lambda b)c\big) = v(a + \lambda b)\geq v(a) = v(ac),
\end{align*}
for all $\lambda\in \mathbb{C}$. Thus $ac\perp_{B}^{v} bc$.

Now, let $a, b$ be self-adjoint. Suppose (v) holds. Let $\lambda = t + is \in \mathbb{C}$ and let $\psi$ be
a state on $\mathfrak{A}$ such that $|\psi(a + tb)| = v(a + tb)$.
We have
\begin{align*}
v^2(a + \lambda b) &\geq |\psi(a + \lambda b)|^2
= |\psi(a + tb) + i\psi(sb)\big)|^2
\\&= |\psi(a + tb)|^2 + |\psi(sb)|^2
\\&\geq |\psi(a + tb)|^2
= v^2(a + tb) \geq v^2(a),
\end{align*}
and so $v(a + \lambda b)\geq v(a)$. Thus $a\perp_{B}^{v} b$.
The converse, that is, (i) implies (v), is obvious.
\end{proof}
In the following result we characterize a positive-real version of the numerical
radius Birkhoff–-James orthogonality.
Our approach is similar to the one given in \cite{M.P.S}.
\begin{theorem}\label{T.3.2}
Let $a, b\in \mathfrak{A}$.
Then the following statements are equivalent:
\begin{itemize}
\item[(i)] $v(a + rb)\geq v(a)$ for all $r \in \mathbb{R}^{+}$.
\item[(ii)] There exists a state $\varphi$ on $\mathfrak{A}$ such that
\begin{align*}
|\varphi(a)| = v(a) \quad \mbox{and} \quad {\rm Re}(\overline{\varphi(a)}\varphi(b)\big)\geq 0.
\end{align*}
\end{itemize}
\end{theorem}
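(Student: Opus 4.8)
The plan is to prove the nontrivial implication (i) $\Rightarrow$ (ii) via a compactness/separation argument, since (ii) $\Rightarrow$ (i) is an easy direct estimate: if $\varphi$ is as in (ii), then for $r \geq 0$ one has $v(a+rb)^2 \geq |\varphi(a+rb)|^2 = |\varphi(a)|^2 + 2r\,{\rm Re}(\overline{\varphi(a)}\varphi(b)) + r^2|\varphi(b)|^2 \geq |\varphi(a)|^2 = v(a)^2$, so $v(a+rb) \geq v(a)$. I would dispose of the trivial case $v(a)=0$ first (any state works, since then $|\varphi(a)| \leq v(a) = 0$ forces $\varphi(a)=0$, and ${\rm Re}(0 \cdot \varphi(b)) = 0 \geq 0$), so assume $v(a) > 0$.

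For (i) $\Rightarrow$ (ii), the key object is, for each $r > 0$, the set $\mathcal{S}_r = \{\varphi \in \mathcal{S}(\mathfrak{A}) : |\varphi(a+rb)| = v(a+rb)\}$, which is nonempty and weak*-compact by the Remark. Using $v(a+rb) \geq v(a)$, I want to extract from each $\mathcal{S}_r$ a state that is "almost" a maximizing state for $a$ with the right positivity. Concretely: pick $\varphi_r \in \mathcal{S}_r$; then $|\varphi_r(a)| \geq |\varphi_r(a+rb)| - r|\varphi_r(b)| = v(a+rb) - r|\varphi_r(b)| \geq v(a) - r v(b)$, so $|\varphi_r(a)| \to$ something $\geq v(a)$, hence $= v(a)$, as $r \to 0^+$. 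For the positivity, expand $v(a)^2 \leq v(a+rb)^2 = |\varphi_r(a+rb)|^2 = |\varphi_r(a)|^2 + 2r\,{\rm Re}(\overline{\varphi_r(a)}\varphi_r(b)) + r^2|\varphi_r(b)|^2$; since $|\varphi_r(a)|^2 \leq v(a)^2$, this forces $2r\,{\rm Re}(\overline{\varphi_r(a)}\varphi_r(b)) + r^2|\varphi_r(b)|^2 \geq v(a)^2 - |\varphi_r(a)|^2 \geq 0$, and dividing by $r$ and letting $r \to 0^+$ gives ${\rm Re}(\overline{\varphi_r(a)}\varphi_r(b)) \gtrsim -\tfrac{r}{2}|\varphi_r(b)|^2 \to 0$ along the relevant net.

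Now I would take a sequence $r_n \to 0^+$ and, by weak*-compactness of $\mathcal{S}(\mathfrak{A})$, pass to a subnet of $(\varphi_{r_n})$ that weak*-converges to some $\varphi \in \mathcal{S}(\mathfrak{A})$. Since $\psi \mapsto \psi(a)$ and $\psi \mapsto \psi(b)$ are weak*-continuous, $\varphi_{r_n}(a) \to \varphi(a)$ and $\varphi_{r_n}(b) \to \varphi(b)$ along the subnet, so $|\varphi(a)| = \lim |\varphi_{r_n}(a)| = v(a)$ and ${\rm Re}(\overline{\varphi(a)}\varphi(b)) = \lim {\rm Re}(\overline{\varphi_{r_n}(a)}\varphi_{r_n}(b)) \geq 0$, using the estimate from the previous paragraph together with boundedness of $|\varphi_{r_n}(b)| \leq v(b) \leq \|b\|$. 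This yields the desired state, completing (i) $\Rightarrow$ (ii).

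The main obstacle is the bookkeeping in the limit: one must be careful that $|\varphi_r(a)| \to v(a)$ genuinely (the lower bound $v(a) - rv(b)$ handles one side, and $|\varphi_r(a)| \leq v(a)$ the other), and that the negative lower bound $-\tfrac{r}{2}|\varphi_r(b)|^2$ on ${\rm Re}(\overline{\varphi_r(a)}\varphi_r(b))$ really does vanish in the limit — which it does because $|\varphi_r(b)|$ stays bounded by $v(b)$. A clean way to package this, avoiding subnet subtleties, is to apply the finite intersection property directly: for each $\varepsilon > 0$ and each $r$, the set $\mathcal{K}_{r,\varepsilon} = \{\varphi \in \mathcal{S}(\mathfrak{A}) : |\varphi(a)| \geq v(a) - \varepsilon \text{ and } {\rm Re}(\overline{\varphi(a)}\varphi(b)) \geq -\varepsilon\}$ is weak*-closed, nonempty (it contains $\varphi_r$ for $r$ small relative to $\varepsilon$), and these sets are nested as $\varepsilon \downarrow 0$; their intersection over all $\varepsilon > 0$ is nonempty by compactness, and any $\varphi$ in it satisfies $|\varphi(a)| \geq v(a)$, hence $= v(a)$, and ${\rm Re}(\overline{\varphi(a)}\varphi(b)) \geq 0$.
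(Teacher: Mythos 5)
Your proposal is correct and follows essentially the same route as the paper: for each small $r>0$ pick a state maximizing $|\varphi(a+rb)|$, show these states are asymptotically norming for $a$ with ${\rm Re}(\overline{\varphi_r(a)}\varphi_r(b))$ bounded below by a quantity tending to $0$, and pass to a weak*-limit point in $\mathcal{S}(\mathfrak{A})$. Your version is in fact slightly cleaner (you use $v^2(a+rb)\geq v^2(a)$ directly rather than the paper's $v(a)-\varepsilon^2$ device, treat $v(a)=0$ explicitly, and offer an equivalent finite-intersection-property packaging), but these are cosmetic differences.
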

\begin{proof}
(i)$\Rightarrow$(ii) Let $v(a + rb)\geq v(a)$ for all $r \in \mathbb{R}^{+}$.
We may assume that $v(a) \neq 0$ otherwise (ii) trivially holds. Thus
there is $\varepsilon_o\!\in\!(0,1)$ such
that $v(a) - \varepsilon^2 \geq 0$ for all $\varepsilon\!\in\!(0,\varepsilon_o)$.
So, it follows that
\begin{align}\label{I.1.T.3.2}
v(a + \varepsilon b) \geq v(a) \geq v(a) - \varepsilon^2 \geq 0
\end{align}
for all $\varepsilon\!\in\!(0,\varepsilon_o)$.
On the other hand, there exists a state $\varphi_{\varepsilon}$
on $\mathfrak{A}$ such that $|\varphi_{\varepsilon}(a + \varepsilon b)| = v(a + \varepsilon b)$.
So, by (\ref{I.1.T.3.2}) it follows that
\begin{align*}
v(a) + \varepsilon v(b) \geq |\varphi_{\varepsilon}(a)| + \varepsilon |\varphi_{\varepsilon}(b)|
\geq |\varphi_{\varepsilon}(a + \varepsilon b)| = v(a + \varepsilon b) \geq v(a).
\end{align*}
Since the set $\mathcal{S}(\mathfrak{A})$ is weak*-compact, we may
assume that $\varphi_{\varepsilon}\stackrel{w^*}{\longrightarrow} \varphi_o$ for
some $\varphi_o\!\in\!\mathcal{S}(\mathfrak{A})$, where $\varepsilon \rightarrow 0^{+}$.
Now, letting $\varepsilon \rightarrow 0^{+}$, we get $|\varphi_o(a)| = v(a)$.

Furthermore, from (\ref{I.1.T.3.2}) it follows that
\begin{align*}
v^2(a) + 2\varepsilon\mbox{Re}(\overline{\varphi_{\varepsilon}(a)}\varphi_{\varepsilon}(b)\big) + \varepsilon^2 v^2(b)
&\geq |\varphi_{\varepsilon}(a)|^2 + 2\varepsilon{\rm Re}(\overline{\varphi_{\varepsilon}(a)}\varphi_{\varepsilon}(b)\big)
+ \varepsilon^2 |\varphi_{\varepsilon}(b)|^2
\\&=|\varphi_{\varepsilon}(a + \varepsilon b)|^2= v^2(a + \varepsilon b)
\\& \geq v^2(a) - 2\varepsilon^2v(a) + \varepsilon^4,
\end{align*}
and hence
\begin{align*}
{\rm Re}(\overline{\varphi_{\varepsilon}(a)}\varphi_{\varepsilon}(b)\big)
\geq \frac{\varepsilon^3}{2} - \varepsilon v(a) -\frac{\varepsilon}{2} v^2(b).
\end{align*}
Thus, by letting $\varepsilon \rightarrow 0^{+}$, we obtain
${\rm Re}(\overline{\varphi_o(a)}\varphi_o(b)\big)\geq 0$.

(ii)$\Rightarrow$(i) Suppose (ii) holds. Therefore, for every $r \in \mathbb{R}^{+}$, we have
\begin{align*}
v^2(a + rb) \geq |\varphi(a + rb)|^2
= |\varphi(a)|^2 + 2r {\rm Re}(\overline{\varphi(a)}\varphi(b)\big)+ r^2|\varphi(b)|^2
\geq v^2(a),
\end{align*}
and so $v(a + rb)\geq v(a)$.
\end{proof}
In what follows, we get a very tractable characterization of the numerical
radius Birkhoff–-James orthogonality in the positive cones of $C^*$-algebras.
Recall that the positive elements of $\mathfrak{A}$ are the elements of the form $a^*a$, where $a\in\mathfrak{A}$.
\begin{corollary}\label{C.5.2}
Let $a, b$ be positive elements of $\mathfrak{A}$.
Then the following statements are equivalent:
\begin{itemize}
\item[(i)] $a\perp_{B}^{v} b$.
\item[(ii)] There exists a state $\varphi$ on $\mathfrak{A}$ such that
$\varphi(a) = v(a)$ and $\varphi(b) = 0$.
\end{itemize}
\end{corollary}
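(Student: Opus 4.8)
The plan is to reduce the corollary to Theorem~\ref{T.3.2} and Proposition~\ref{P.2.2}(v), exploiting the two features that distinguish positive elements: they are self-adjoint, and every state sends them to a nonnegative real number. Throughout I assume $v(a)\neq 0$; if $v(a)=0$ then $a=0$ (a positive element satisfies $v(a)=\|a\|$), a degenerate situation I set aside. The implication (ii)$\Rightarrow$(i) I would dispose of first and quickly: if $\varphi\in\mathcal{S}(\mathfrak{A})$ satisfies $\varphi(a)=v(a)$ and $\varphi(b)=0$, then for every $\lambda\in\mathbb{C}$ one has $v(a+\lambda b)\geq|\varphi(a+\lambda b)|=|\varphi(a)+\lambda\varphi(b)|=v(a)$, so $a\perp_{B}^{v}b$.

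For (i)$\Rightarrow$(ii), the first step is to strip off the complex parameter. Since $a$ and $b$ are self-adjoint, Proposition~\ref{P.2.2}(v) shows that $a\perp_{B}^{v}b$ is equivalent to $v(a+rb)\geq v(a)$ for all $r\in\mathbb{R}$; in particular $v\big(a+r(-b)\big)\geq v(a)$ for all $r\in\mathbb{R}^{+}$. The second step is to feed the pair $(a,-b)$ into Theorem~\ref{T.3.2}, which then produces a \emph{single} state $\varphi$ on $\mathfrak{A}$ with $|\varphi(a)|=v(a)$ and ${\rm Re}\big(\overline{\varphi(a)}\,\varphi(-b)\big)\geq 0$.

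The third step exploits positivity to upgrade these two conditions to the conclusion. Since $a\geq 0$, the scalar $\varphi(a)$ is a nonnegative real, so $|\varphi(a)|=v(a)$ forces $\varphi(a)=v(a)>0$, whence $\overline{\varphi(a)}=v(a)$; likewise $b\geq 0$ gives $\varphi(b)\geq 0$ (in particular real). Substituting $\overline{\varphi(a)}=v(a)$ into ${\rm Re}\big(\overline{\varphi(a)}\,\varphi(-b)\big)\geq 0$ yields $-v(a)\varphi(b)\geq 0$, i.e.\ $\varphi(b)\leq 0$; together with $\varphi(b)\geq 0$ this gives $\varphi(b)=0$. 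Thus $\varphi$ is exactly the state demanded in (ii).

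The one place that calls for a moment's thought is why one applies Theorem~\ref{T.3.2} to $(a,-b)$ rather than to $(a,b)$: the pair $(a,b)$ only yields ${\rm Re}\big(\overline{\varphi(a)}\varphi(b)\big)\geq 0$, i.e.\ $\varphi(b)\geq 0$, which is automatic from positivity and carries no information, whereas $(a,-b)$ delivers the reverse inequality so the two can be squeezed to $\varphi(b)=0$. It is worth stressing that no averaging of two states is required here — a single state suffices precisely because positivity of $a$ pins $\varphi(a)$ down to the real value $v(a)$, turning the real-part condition into a genuine sign condition on $\varphi(b)$.
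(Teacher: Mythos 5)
Your proof is correct and follows essentially the same route as the paper's: both apply Theorem~\ref{T.3.2} to the pair $(a,-b)$ (the paper reaches $a\perp_{B}^{v}(-b)$ via Proposition~\ref{P.2.2}(iii) rather than your detour through (v), an immaterial difference) and then use positivity of $a$ and $b$ to squeeze $\varphi(b)$ between $0$ and $0$. Your explicit setting-aside of the case $v(a)=0$ mirrors the paper's implicit assumption (its proof divides by $\varphi(a)$), so there is no substantive divergence.
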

\begin{proof}
(i)$\Rightarrow$(ii) Let $a\perp_{B}^{v} b$. By Proposition \ref{P.2.2}, we have $a\perp_{B}^{v} (-b)$.
So, by Theorem \ref{T.3.2} there exists a state $\varphi$ on $\mathfrak{A}$ such that
$|\varphi(a)| = v(a)$ and $\mbox{Re}\big(\overline{\varphi(a)}\varphi(-b)\big) \geq 0$.
Since $a, b$ are positive, we reach that $\varphi(a) = v(a)$ and
\begin{align*}
0 \leq \varphi(b) = \frac{-\mbox{Re}\big(\overline{\varphi(a)}\varphi(-b)\big)}{\varphi(a)}\leq 0.
\end{align*}
Thus $\varphi(b) = 0$.

(ii)$\Rightarrow$(i) Let $\varphi$ be a state on $\mathfrak{A}$ such that
$\varphi(a) = v(a)$ and $\varphi(b) = 0$. Then for every $\lambda \in \mathbb{C}$,
we have
\begin{align*}
v(a + \lambda b)\geq |\varphi(a + \lambda b)| = |\varphi(a) + \lambda \varphi(b)| = \varphi(a) = v(a),
\end{align*}
and hence $a\perp_{B}^{v} b$.
\end{proof}
\begin{remark}\label{R.6.2}
For positive elements $a, b$ of a unital $C^*$-algebra $\mathfrak{A}$,
Komure et al. in \cite[Lemma 2.3]{K.S.T} proved that $a\perp_{B} b$ if and only if
there exists a state $\varphi$ on $\mathfrak{A}$ such that
$\varphi(a) = \|a\|$ and $\varphi(b) = 0$. Therefore, by Corollary \ref{C.5.2}, we conclude that
the relations $\perp_{B} $ and $\perp_{B}^{v} $ are coincident in the positive cones of $C^*$-algebras.
\end{remark}
In a normed linear space $(\mathcal{X},\|\!\cdot\!\|)$,
the {\it Gateaux derivatives of the norm} are given for
$x, y \in\mathcal{X}$ by the two expressions
\begin{center}
$\rho^{\|\cdot\|}_{\pm}(x,y):=\lim\limits_{t\to
0^{\pm}}\frac{\|x+ty\|^2-\|x\|^2}{2t}=\|x\|\!\lim\limits_{t\to
0^{\pm}}\frac{\|x+ty\|-\|x\|}{t}$.
\end{center}
If it will not cause a confusion, we will write $\rho_{\pm}$ instead of $\rho^{\|\cdot\|}_{\pm}$.
When the norm on $\mathcal{X}$ comes from an inner product
$\langle\cdot|\cdot\rangle\colon \mathcal{X}\!\times\! \mathcal{X}\!\to\!\mathbb{R}$, we obtain
$\rho_+(x,y)=\langle x|y\rangle=\rho_-(x,y)$, i.e., functionals
$\rho_+$, $\rho_-$ are nice generalizations of inner
products. By convexity of the norm the above definitions
are meaningful. The mappings $\rho_+$ and $\rho_-$ are called
the {\it norm derivatives} and their following properties, which will be
useful in the present note, can be found, e.g., in \cite{Dragomir}:

(ND1)\quad $\forall x,y\in \mathcal{X}
\quad -\|x\|\,\|y\|\leq \rho_-(x,y)\leq\rho_+(x,y) \leq \|x\|\,\|y\|$;

(ND2)\quad $\forall x,y\in \mathcal{X}\ \forall \alpha\geqslant0
\quad \rho_\pm(\alpha x, y)=\alpha\rho_\pm(x,y)=\rho_\pm(x,\alpha y)$;

(ND3)\quad $\forall x,y\in \mathcal{X}\ \forall \alpha<0
\quad \rho_\pm(\alpha x,y)=\alpha\rho_\mp(x,y)=\rho_\pm(x,\alpha y)$;

(ND4)\quad $\forall x,y\in \mathcal{X}\ \forall \alpha\in\mathbb{R}
\quad \rho_\pm(x,\alpha x+y)=\alpha\|x\|^2+\rho_\pm(x,y)$.

In a real normed space $\mathcal{X}$, we have for arbitrary $x,y\in \mathcal{X}$:

(ND5)\quad $x \perp_{B} y \ \Leftrightarrow \ \rho_-(x,y)\leq 0 \leq \rho_+(x,y)$;

(ND6)\quad $\rho_-(x,y) = 0\ \Rightarrow\ x\perp_{B}y,\quad \rho_+(x,y) = 0\ \Rightarrow\ x\perp_{B}y$.

Moreover, mappings $\rho_+, \rho_-$ are continuous with respect
to the second variable, but not necessarily with respect to the first one.

The condition (ND5) shows that the Birkhoff--James orthogonality is connected with the norm derivatives.
Therefore, in view of Theorem \ref{T.3.2}, it seems to be quite
natural to compute the numerical radius derivatives, i.e. the norm
derivatives in $\mathfrak{A}$ equipped with the norm $v(\cdot)$.
\begin{theorem}\label{T.norm.der}
Let $a, b\in \mathfrak{A}\setminus\{0\}$.
Then the following statements are true:
\begin{itemize}
\item[(i)] $\rho^{v(\cdot)}_+(a,b)=\max\big\{{\rm Re}\big(\overline{\varphi(a)}\varphi(b)\big):
\varphi\!\in\!\mathcal{S}(\mathfrak{A}),\ |\varphi(a)|\!=\!v(a)\big\}$.
\item[(ii)] $\rho^{v(\cdot)}_-(a,b)=\min\big\{{\rm Re}(\overline{\varphi(a)}\varphi(b)\big):
\varphi\!\in\!\mathcal{S}(\mathfrak{A}),\ |\varphi(a)|\!=\!v(a)\big\}$.
\end{itemize}
\end{theorem}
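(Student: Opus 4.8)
The plan is to prove (i) by a two-sided estimate in terms of states and then to deduce (ii) from it via the algebraic identity (ND3). Write $K_a:=\{\varphi\in\mathcal{S}(\mathfrak{A}):|\varphi(a)|=v(a)\}$. I would begin by recording that, since $\mathcal{S}(\mathfrak{A})$ is weak*-compact and $\varphi\mapsto|\varphi(c)|$ is weak*-continuous for every $c\in\mathfrak{A}$, one has $v(c)=\max\{|\varphi(c)|:\varphi\in\mathcal{S}(\mathfrak{A})\}$; hence $K_a$ is nonempty (see the Remark preceding Proposition \ref{P.2.2}) and, being weak*-closed in $\mathcal{S}(\mathfrak{A})$, is weak*-compact. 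As $\varphi\mapsto{\rm Re}\big(\overline{\varphi(a)}\varphi(b)\big)$ is weak*-continuous, the maximum and minimum in the statement are attained, so the right-hand sides make sense. I would also recall that the limit defining $\rho^{v(\cdot)}_{+}(a,b)$ exists, by convexity of $t\mapsto v(a+tb)$.

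For the inequality ``$\geq$'' in (i): fix $\varphi\in K_a$ and $t>0$; then
\[
v(a+tb)^2\geq|\varphi(a+tb)|^2=v(a)^2+2t\,{\rm Re}\big(\overline{\varphi(a)}\varphi(b)\big)+t^2|\varphi(b)|^2,
\]
so $\big(v(a+tb)^2-v(a)^2\big)/(2t)\geq{\rm Re}\big(\overline{\varphi(a)}\varphi(b)\big)+\tfrac{t}{2}|\varphi(b)|^2$; letting $t\to0^{+}$ and then maximizing over $\varphi\in K_a$ gives the claim. For ``$\leq$'': for each $t>0$ pick a state $\varphi_t$ with $|\varphi_t(a+tb)|=v(a+tb)$; using $|\varphi_t(a)|\leq v(a)$ and $|\varphi_t(b)|\leq v(b)$,
\begin{align*}
v(a+tb)^2&=|\varphi_t(a)|^2+2t\,{\rm Re}\big(\overline{\varphi_t(a)}\varphi_t(b)\big)+t^2|\varphi_t(b)|^2\\
&\leq v(a)^2+2t\,{\rm Re}\big(\overline{\varphi_t(a)}\varphi_t(b)\big)+t^2v(b)^2,
\end{align*}
whence $\big(v(a+tb)^2-v(a)^2\big)/(2t)\leq{\rm Re}\big(\overline{\varphi_t(a)}\varphi_t(b)\big)+\tfrac{t}{2}v(b)^2$. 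By weak*-compactness the net $(\varphi_t)_{t>0}$ has a subnet $\varphi_{t_\alpha}\to\varphi_0$ in $\mathcal{S}(\mathfrak{A})$ with $t_\alpha\to0^{+}$; since $\varphi_{t_\alpha}(a+t_\alpha b)\to\varphi_0(a)$ while $v(a+t_\alpha b)\to v(a)$, this forces $|\varphi_0(a)|=v(a)$, i.e. $\varphi_0\in K_a$. Passing to the limit in the previous inequality (its left-hand side tends to $\rho^{v(\cdot)}_{+}(a,b)$) yields $\rho^{v(\cdot)}_{+}(a,b)\leq{\rm Re}\big(\overline{\varphi_0(a)}\varphi_0(b)\big)\leq\max\big\{{\rm Re}\big(\overline{\varphi(a)}\varphi(b)\big):\varphi\in K_a\big\}$. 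Combining the two bounds proves (i).

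For (ii) I would use (ND3) with $\alpha=-1$, which gives $\rho^{v(\cdot)}_{-}(a,b)=-\rho^{v(\cdot)}_{+}(a,-b)$; applying (i) with $b$ replaced by $-b$ (note that $K_a$ is unchanged) and using ${\rm Re}\big(\overline{\varphi(a)}\varphi(-b)\big)=-{\rm Re}\big(\overline{\varphi(a)}\varphi(b)\big)$, the maximum turns into a minimum and (ii) follows. Alternatively one can repeat the argument of the second paragraph with $t\to0^{-}$, the inequalities reversing upon division by $2t<0$.

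The step I expect to be the main obstacle is the ``$\leq$'' direction in (i): one must extract, in the limit $t\to0^{+}$, a \emph{single} state $\varphi_0$ that simultaneously lies in $K_a$ and realizes the value $\rho^{v(\cdot)}_{+}(a,b)$. Since $\mathcal{S}(\mathfrak{A})$ need not be weak*-metrizable this has to be carried out with nets rather than sequences, and the membership $\varphi_0\in K_a$ rests on combining the convergence $\varphi_{t_\alpha}(a+t_\alpha b)\to\varphi_0(a)$ with the continuity $v(a+t_\alpha b)\to v(a)$.
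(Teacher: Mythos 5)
Your proof is correct, and it takes a genuinely more self-contained route than the paper's. For the lower bound the paper invokes Dragomir's formula $\rho^{v(\cdot)}_+(a,b)=v(a)\sup\big\{{\rm Re}\,\varphi(b):\varphi\in\mathfrak{A}',\ \|\varphi\|=1,\ \varphi(a)=v(a)\big\}$ and then manufactures, from each state $\varphi$ with $|\varphi(a)|=v(a)$, the norming functional $\psi=\frac{1}{v(a)}\overline{\varphi(a)}\varphi$; your direct expansion of $|\varphi(a+tb)|^2$ and division by $2t$ reaches the same bound without any external result. For the upper bound the paper argues structurally: by (ND4) the element $c=-\frac{\rho^{v(\cdot)}_+(a,b)}{v^2(a)}a+b$ satisfies $\rho^{v(\cdot)}_+(a,c)=0$, hence $v(a+rc)\geq v(a)$ for all $r$ by (ND6), and Theorem \ref{T.3.2} then supplies a single state $\varphi_o$ with $|\varphi_o(a)|=v(a)$ and ${\rm Re}\big(\overline{\varphi_o(a)}\varphi_o(c)\big)\geq 0$, which unravels to $\rho^{v(\cdot)}_+(a,b)\leq{\rm Re}\big(\overline{\varphi_o(a)}\varphi_o(b)\big)$. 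You instead inline the compactness argument that underlies the proof of Theorem \ref{T.3.2}: you take norming states $\varphi_t$ for $a+tb$, pass to a weak*-convergent subnet, and retain the quantitative value of the difference quotient rather than only the sign; your verification that the limit state lies in $K_a$ (via $\varphi_{t_\alpha}(a+t_\alpha b)\to\varphi_0(a)$ and $v(a+t_\alpha b)\to v(a)$) is exactly the point that needs care, and you handle it correctly with nets. Both arguments are valid; the paper's buys brevity by reusing Theorem \ref{T.3.2} and the norm-derivative calculus, while yours is elementary, avoids the citation to Dragomir, and makes the attainment of the maximum explicit through the state $\varphi_0$. Your deduction of (ii) from (i) via (ND3) with $\alpha=-1$ is also legitimate and replaces the paper's one-line remark that the two computations are symmetric.
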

\begin{proof}
Since the proofs are similar we calculate only $\rho^{v(\cdot)}_+(a,b)$.
It follows from \cite[Theorem 15, p.36]{Dragomir} that
\begin{equation}\label{rho-ab}
\rho^{v(\cdot)}_+(a,b)=v(a)\sup\big\{{\rm Re}(\varphi(b)):
\varphi\!\in\!\mathfrak{A}',\ \|\varphi\|\!=\!1,\ \varphi(a)\!=\!v(a)\big\}.
\end{equation}
Fix $\varphi\!\in\!\mathcal{S}(\mathfrak{A})$ such that $|\varphi(a)|\!=\!v(a)$.
Next we define a linear mapping $\psi\colon\mathfrak{A}\!\to\!\mathbb{C}$
by the formula $\psi(\cdot):=\frac{1}{v(a)}\overline{\varphi(a)}\varphi(\cdot)$.
A moments reflection shows that
\begin{equation}\label{incusion-re-phi}
\psi\!\in\!\mathfrak{A}',\quad \|\psi\|\!=\! 1\quad {\rm and}\quad \psi(a)=v(a).
\end{equation}
Combining (\ref{rho-ab}) and (\ref{incusion-re-phi}), we immediately get
\begin{equation}\label{rho-ab-inequality}
\rho^{v(\cdot)}_+(a,b)\geq \sup\big\{{\rm Re}\big(\overline{\varphi(a)}\varphi(b)\big):
\varphi\!\in\!\mathcal{S}(\mathfrak{A}),\ |\varphi(a)|\!=\!v(a)\big\}.
\end{equation}
Now we are going to prove the converse inequality. It follows
from the property (ND4) that
$\rho^{v(\cdot)}_+\left(a, \frac{-\rho^{v(\cdot)}_+(a,b)}{v^2(a)}a+b\right) = 0$.
Applying (ND6) we get
\begin{center}
$v\left(a+ r\left(\frac{-\rho^{v(\cdot)}_+(a,b)}{v^2(a)}a+b\right)\right)\geq v(a),$
\end{center}
for all $r\in \mathbb{R}$. Now by Theorem \ref{T.3.2} there
is a state $\varphi_o\!\in\!\mathcal{S}(\mathfrak{A})$ such
that $|\varphi_o(a)|\! =\! v(a)$  and
\begin{center}
${\rm Re}\left(\overline{\varphi_o(a)}\varphi_o\left(\frac{-\rho^{v(\cdot)}_+(a,b)}{v^2(a)}a+b\right)\right)\geq 0$.
\end{center}
This implies ${\rm Re}\left(\overline{\varphi_o(a)}\varphi_o(a)\frac{-\rho^{v(\cdot)}_+(a,b)}{v^2(a)}\right)+{\rm Re}\left(\overline{\varphi_o(a)}\varphi_o(b)\right)\geq 0$.
Since $\overline{\varphi_o(a)}\varphi_o(a)=v^2(a)$, we
obtain $\rho^{v(\cdot)}_+(a,b)\leq {\rm Re}\big(\overline{\varphi_o(a)}\varphi_o(b)\big)$.
Further, from this inequality  and from (\ref{rho-ab-inequality}) we have
\begin{equation*}
\rho^{v(\cdot)}_+(a,b)=\sup\big\{{\rm Re}\big(\overline{\varphi(a)}\varphi(b)\big):
\varphi\!\in\!\mathcal{S}(\mathfrak{A}),\ |\varphi(a)|\!=\!v(a)\big\}.
\end{equation*}
Finally, since $\rho^{v(\cdot)}_+(a,b)={\rm Re}\big(\overline{\varphi_o(a)}\varphi_o(b)\big)$, the
word ``sup" can be replaced by the word ``max". The proof is complete.
\end{proof}
For $A, B\in \mathbb{B}(\mathcal{H})$, Bhatia and \v{S}emrl in \cite[Remark 3.1]{B.S}
and Paul in \cite[Lemma 2]{Pa} independently proved that $A\perp_{B} B$ if and only if
there exists a sequence $\{x_n\}$ of unit
vectors in $\mathcal{H}$ such that
\begin{align*}
\lim_{n\rightarrow\infty} \|Ax_n\| = \|A\| \quad
\mbox{and} \quad \lim_{n\rightarrow\infty}\langle Ax_n, Bx_n\rangle = 0.
\end{align*}
Some authors extended the well known result of Bhatia--\v{S}emrl (see \cite{A.R.1,A.R.2,B.G,W,Z.2}).
Very recently, the numerical radius Birkhoff–-James orthogonality in $\mathbb{B}(\mathcal{H})$
has been studied in \cite{M.P.S} as our work was in progress.
In fact, Mal et al. \cite[Theorem 2.3]{M.P.S} obtained the following characterization
of the numerical radius Birkhoff–-James orthogonality for Hilbert space operators:
if $A, B\in \mathbb{B}(\mathcal{H})$, then $A\perp_{B}^{w} B$ if and only if
for each $\theta \in [0, 2\pi)$,
there exists a sequence $\{x_n\}$ of unit
vectors in $\mathcal{H}$ such that
\begin{align*}
\displaystyle{\lim_{n\rightarrow \infty}}\big|\langle Ax_n, x_n\rangle\big| = w(A)
\quad \mbox{and} \quad
\displaystyle{\lim_{n\rightarrow \infty}}
\mbox{Re}\big(e^{i\theta}\langle x_n, Ax_n\rangle\langle Bx_n, x_n\rangle\big)\geq 0.
\end{align*}
In what follows we shall develop the above result for elements of a $C^*$-algebra.
\begin{theorem}\label{T.7.2}
Let $a, b\in \mathfrak{A}$.
Then the following statements are equivalent:
\begin{itemize}
\item[(i)] $a\perp_{B}^{v} b$.
\item[(ii)] For each $\theta \in [0, 2\pi)$, there exists a state $\varphi_{_{\theta}}$ on $\mathfrak{A}$ such that
\begin{align*}
|\varphi_{_{\theta}}(a)| = v(a) \quad \mbox{and} \quad {\rm Re}\big(e^{i\theta}\overline{\varphi_{_{\theta}}(a)}\varphi_{_{\theta}}(b)\big) \geq 0.
\end{align*}
\end{itemize}
\end{theorem}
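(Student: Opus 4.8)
The plan is to obtain Theorem \ref{T.7.2} as a direct consequence of Theorem \ref{T.3.2}, via a polar decomposition of the complex parameter $\lambda$.

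First I would rewrite the defining condition of $\perp_{B}^{v}$. Every $\lambda \in \mathbb{C}$ has the form $\lambda = r e^{i\theta}$ with $r \in \mathbb{R}^{+}$ and $\theta \in [0, 2\pi)$ (the value $\lambda = 0$ being covered by $r = 0$). Hence $a \perp_{B}^{v} b$, i.e.\ $v(a + \lambda b) \geq v(a)$ for all $\lambda \in \mathbb{C}$, holds if and only if for every $\theta \in [0, 2\pi)$ we have $v\big(a + r(e^{i\theta}b)\big) \geq v(a)$ for all $r \in \mathbb{R}^{+}$. In other words, $a \perp_{B}^{v} b$ if and only if, for each $\theta$, the pair $(a,\, e^{i\theta}b)$ satisfies condition (i) of Theorem \ref{T.3.2}.

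Next, for each fixed $\theta$ I would invoke Theorem \ref{T.3.2} with $e^{i\theta}b$ playing the role of $b$: condition (i) there holds for the pair $(a,\, e^{i\theta}b)$ exactly when there is a state $\varphi_{_{\theta}}$ on $\mathfrak{A}$ with $|\varphi_{_{\theta}}(a)| = v(a)$ and ${\rm Re}\big(\overline{\varphi_{_{\theta}}(a)}\,\varphi_{_{\theta}}(e^{i\theta}b)\big) \geq 0$. Linearity of $\varphi_{_{\theta}}$ gives $\varphi_{_{\theta}}(e^{i\theta}b) = e^{i\theta}\varphi_{_{\theta}}(b)$, so the last inequality is precisely ${\rm Re}\big(e^{i\theta}\overline{\varphi_{_{\theta}}(a)}\,\varphi_{_{\theta}}(b)\big) \geq 0$. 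Running this equivalence over all $\theta \in [0, 2\pi)$ yields (i) $\Leftrightarrow$ (ii), and both implications are covered at once because Theorem \ref{T.3.2} is an equivalence.

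There is no serious obstacle here: essentially all the content sits inside Theorem \ref{T.3.2} (and, ultimately, the weak*-compactness argument in its proof). The only points needing a moment's care are the observation that $\{r e^{i\theta} : r \geq 0,\ \theta \in [0,2\pi)\} = \mathbb{C}$, so that the universal quantifier over $\lambda$ genuinely unfolds into a quantifier over $\theta$ followed by one over $r \in \mathbb{R}^{+}$, and the bookkeeping that the state $\varphi_{_{\theta}}$ is permitted to depend on $\theta$. One could, if desired, also phrase the argument so as to exhibit this statement explicitly as the $C^*$-algebra analogue of the Bhatia--\v{S}emrl-type result of Mal et al.\ recalled above, with states replacing the vector functionals $x \mapsto \langle \,\cdot\, x, x\rangle$.
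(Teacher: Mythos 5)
Your proposal is correct and follows essentially the same route as the paper: both directions reduce to Theorem \ref{T.3.2} by writing $\lambda = re^{i\theta}$ and applying that theorem to the pair $(a, e^{i\theta}b)$. The only cosmetic difference is that the paper re-proves the converse implication by an explicit computation rather than citing the (ii)$\Rightarrow$(i) half of Theorem \ref{T.3.2}, which incidentally produces the sharper inequality $v^2(a+\lambda b)\geq v^2(a)+|\lambda|^2|\varphi_{_{\theta}}(b)|^2$ reused later in Corollary \ref{C.8.2}.
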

\begin{proof}
(i)$\Rightarrow$(ii) Let $a\perp_{B}^{v} b$. Hence
$v(a + re^{i\theta}b)\geq v(a)$ for all $\theta \in [0, 2\pi)$ and $r\in \mathbb{R}^{+}$.
Fix $\theta$ and let $b_{\theta} = e^{i\theta}b$. Then we have
$v(a + rb_{\theta})\geq v(a)$ for all $r\in \mathbb{R}^{+}$.
By Theorem \ref{T.3.2} there exists a state $\varphi_{_{\theta}}$ on $\mathfrak{A}$ such that
$|\varphi_{_{\theta}}(a)| = v(a)$ and $\mbox{Re}\big(\overline{\varphi_{_{\theta}}(a)}\varphi_{_{\theta}}(b_{\theta})\big) \geq 0$.
From this it follows that
$|\varphi_{_{\theta}}(a)| = v(a)$ and $\mbox{Re}\big(e^{i\theta}\overline{\varphi_{_{\theta}}(a)}\varphi_{_{\theta}}(b)\big) \geq 0$.

(ii)$\Rightarrow$(i) Suppose (ii) holds.
Let $\lambda \in \mathbb{C}$. Then there exists $\theta \in [0, 2\pi)$ such that
$\lambda = |\lambda|e^{i\theta}$. Therefore, there exists a state $\varphi_{_{\theta}}$ on $\mathfrak{A}$ such that
$|\varphi_{_{\theta}}(a)| = v(a)$ and $\mbox{Re}\big(e^{i\theta}\overline{\varphi_{_{\theta}}(a)}\varphi_{_{\theta}}(b)\big) \geq 0$. Thus
\begin{align}\label{I.1.T.7.2}
v^2(a + \lambda b) &\geq \big|\varphi_{_{\theta}}(a + |\lambda|e^{i\theta} b)\big|^2\nonumber
\\&= |\varphi_{_{\theta}}(a)|^2 + 2|\lambda|\mbox{Re}\big(e^{i\theta}\overline{\varphi_{_{\theta}}(a)}\varphi_{_{\theta}}(b)\big)
+ |\lambda|^2|\varphi_{_{\theta}}(b)|^2\nonumber
\\&\geq v^2(a) + |\lambda|^2|\varphi_{_{\theta}}(b)|^2
\\& \geq v^2(a),\nonumber
\end{align}
and so $v(a + \lambda b)\geq v(a)$. Hence $a\perp_{B}^{v} b$.
\end{proof}
Recall that (e.g., see \cite[p. 63]{G.R}) the {\it Crawford number} of $B\in\mathbb{B}(\mathcal{H})$ is defined by
\begin{align}\label{Craw}
c(B) := \inf \big\{|\langle Bx, x\rangle|:x\in \mathcal{H},\|x\| =1\big\}.
\end{align}
This concept is useful in studying linear operators (see \cite{G.R},
and further references therein).
The {\it numerical radius Crawford number} of $b\in \mathfrak{A}$ can be defined by
\begin{align*}
\mathcal{C}(b) := \inf\big\{|\varphi(b)|: \, \varphi \in \mathcal{S}(\mathfrak{A})\big\}.
\end{align*}
Notice that for $B\in\mathbb{B}(\mathcal{H})$, by \cite[Remark 2.3]{Z.1},
$\mathcal{C}(B)$ coincides with the classical $c(B)$ given by (\ref{Craw}) above.

Before we present the next results, some examples are appropriate.
More precisely, the proposition below gives a large family of elements satisfying $\mathcal{C}(b)>0$.
\begin{proposition}
Let $a\!\in\!\mathfrak{A}$ with $v(a)<1$. If $b:=e+a$, then $\mathcal{C}(b)>0$.
\end{proposition}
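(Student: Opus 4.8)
The plan is to show that the numerical range $V(b)$ stays bounded away from $0$. Recall that $V(b) = \{\varphi(b) : \varphi \in \mathcal{S}(\mathfrak{A})\}$ and, since $\varphi(e) = 1$ for every state $\varphi$, we have $\varphi(b) = \varphi(e + a) = 1 + \varphi(a)$ for all $\varphi \in \mathcal{S}(\mathfrak{A})$. Thus $V(b) = 1 + V(a)$, a translate of the numerical range of $a$.

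The key point is then the elementary estimate: for any $\varphi \in \mathcal{S}(\mathfrak{A})$,
\begin{align*}
|\varphi(b)| = |1 + \varphi(a)| \geq 1 - |\varphi(a)| \geq 1 - v(a) > 0,
\end{align*}
where the first inequality is the reverse triangle inequality, the second uses $|\varphi(a)| \leq v(a)$ (which holds by the very definition of $v(a)$ as the supremum of $|\xi|$ over $\xi \in V(a)$), and the strict positivity comes from the hypothesis $v(a) < 1$. Taking the infimum over all states $\varphi$ yields
\begin{align*}
\mathcal{C}(b) = \inf\big\{|\varphi(b)| : \varphi \in \mathcal{S}(\mathfrak{A})\big\} \geq 1 - v(a) > 0,
\end{align*}
which is exactly the assertion.

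There is essentially no obstacle here; the statement is a direct consequence of the translation behaviour of the numerical range together with the bound $v(a) < 1$. The only thing worth noting is that one should be careful to use $|\varphi(a)| \leq v(a)$ rather than anything involving the operator norm $\|a\|$, since $v(\cdot)$ and $\|\cdot\|$ differ; but this is immediate from the definition of $v(a)$ recalled in Section \ref{s1}. One could also phrase the whole argument geometrically: $V(b)$ is the compact convex set $V(a)$ shifted by $1$, and since $V(a) \subseteq \{|\xi| \leq v(a)\}$ with $v(a) < 1$, the shifted set misses a neighbourhood of the origin, so its distance to $0$ — which is precisely $\mathcal{C}(b)$ — is at least $1 - v(a)$.
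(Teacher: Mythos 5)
Your proof is correct and follows essentially the same route as the paper: apply the reverse triangle inequality to $\varphi(b)=\varphi(e)+\varphi(a)$ and use $|\varphi(a)|\leq v(a)<1$ before passing to the infimum. The only cosmetic difference is that the paper inserts an auxiliary constant $\gamma$ with $v(a)<\gamma<1$, which your direct bound $\mathcal{C}(b)\geq 1-v(a)>0$ renders unnecessary.
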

\begin{proof}
Since $v(a)<1$, it follows that there is a positive number $\gamma$ such that $v(a)<\gamma<1$.
Fix $\varphi\! \in\! \mathcal{S}(\mathfrak{A})$. Then we obtain
\begin{center}
$|\varphi(b)|=|\varphi(e)+\varphi(a)|\geq |\varphi(e)|-|\varphi(a)|\geq 1-v(a)>1-\gamma$
\end{center}
and passing to the infimum over $\mathcal{S}(\mathfrak{A})$ we obtain
$\mathcal{C}(b)\!\geq\! 1\!-\!\gamma$. So $\mathcal{C}(b)\!>\!0$.
\end{proof}
Now, as an immediate consequence of Theorem \ref{T.7.2}, we have the following result.
\begin{corollary}\label{C.8.2}
Let $a, b\in \mathfrak{A}$.
Then the following statements are equivalent:
\begin{itemize}
\item[(i)] $a\perp_{B}^{v} b$.
\item[(ii)] $v^2(a + \lambda b)\geq v^2(a) + |\lambda|^2 \mathcal{C}^2(b)$ for all $\lambda \in \mathbb{C}$.
\end{itemize}
\end{corollary}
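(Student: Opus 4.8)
The plan is to read this off directly from Theorem~\ref{T.7.2}: the inequality in (ii) is precisely the sharper estimate that the proof of that theorem already produces, before the term $|\lambda|^2|\varphi_{_{\theta}}(b)|^2$ is discarded at the last step. So the only new ingredient is the trivial bound $|\varphi(b)|\geq\mathcal{C}(b)$ for every state $\varphi$, coming straight from the definition of the numerical radius Crawford number.

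For the implication (ii)$\Rightarrow$(i), I would simply note that $\mathcal{C}(b)\geq 0$ always, so (ii) forces $v^2(a+\lambda b)\geq v^2(a)$, hence $v(a+\lambda b)\geq v(a)$, for all $\lambda\in\mathbb{C}$; this is exactly $a\perp_{B}^{v}b$.

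For (i)$\Rightarrow$(ii), fix $\lambda\in\mathbb{C}$ and write $\lambda=|\lambda|e^{i\theta}$ with $\theta\in[0,2\pi)$. By Theorem~\ref{T.7.2} there is a state $\varphi_{_{\theta}}$ on $\mathfrak{A}$ with $|\varphi_{_{\theta}}(a)|=v(a)$ and ${\rm Re}\big(e^{i\theta}\overline{\varphi_{_{\theta}}(a)}\varphi_{_{\theta}}(b)\big)\geq 0$. Then, exactly as in the computation~(\ref{I.1.T.7.2}),
\[
v^2(a+\lambda b)\geq |\varphi_{_{\theta}}(a)|^2 + 2|\lambda|\,{\rm Re}\big(e^{i\theta}\overline{\varphi_{_{\theta}}(a)}\varphi_{_{\theta}}(b)\big) + |\lambda|^2|\varphi_{_{\theta}}(b)|^2 \geq v^2(a) + |\lambda|^2|\varphi_{_{\theta}}(b)|^2.
\]
Since $\varphi_{_{\theta}}\in\mathcal{S}(\mathfrak{A})$, we have $|\varphi_{_{\theta}}(b)|\geq\mathcal{C}(b)$, and therefore $v^2(a+\lambda b)\geq v^2(a)+|\lambda|^2\mathcal{C}^2(b)$, which is (ii).

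I do not expect any genuine obstacle here. The one point worth flagging is that the state $\varphi_{_{\theta}}$ depends on the argument $\theta$ of $\lambda$, but since (ii) is a pointwise statement in $\lambda$ this is harmless; the corollary merely records the quantitative strengthening that Theorem~\ref{T.7.2} already delivers.
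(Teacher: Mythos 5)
Your proof is correct and follows essentially the same route as the paper: both extract the sharper inequality $v^2(a+\lambda b)\geq v^2(a)+|\lambda|^2|\varphi_{_{\theta}}(b)|^2$ from the computation in the proof of Theorem~\ref{T.7.2} and then apply the trivial bound $|\varphi_{_{\theta}}(b)|\geq\mathcal{C}(b)$, with the converse being immediate since $\mathcal{C}(b)\geq 0$.
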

\begin{proof}
If $a\perp_{B}^{v} b$, then for each $\lambda \in \mathbb{C}$, by (\ref{I.1.T.7.2}),
there exists a state $\varphi_{_{\theta}}$ on $\mathfrak{A}$ such that
$v^2(a + \lambda b) \geq v^2(a) + |\lambda|^2|\varphi_{_{\theta}}(b)|^2$. Hence
$v^2(a + \lambda b)\geq v^2(a) + |\lambda|^2 \mathcal{C}^2(b)$.
The converse is obvious.
\end{proof}
The following result is a kind of Pythagorean inequality in $C^*$-algebras.
We are going to apply this inequality in approximation theory.
\begin{proposition}\label{P.9.2}
Let $a, b\in \mathfrak{A}$ with $\mathcal{C}(b)>0$.
Then there exists a unique $\zeta \in \mathbb{C}$, such that
\begin{align*}
v^2\Big((a + \zeta b) + \lambda b\Big)\geq v^2(a + \zeta b) + |\lambda|^2 \mathcal{C}^2(b)
\end{align*}
for all $\lambda \in \mathbb{C}$.
\end{proposition}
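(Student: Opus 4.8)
The plan is to recognize the asserted estimate as a best-approximation statement and reduce it, via Corollary \ref{C.8.2}, to a minimization problem for the numerical radius norm. Indeed, applying Corollary \ref{C.8.2} to the pair $(a+\zeta b,\,b)$ shows that, for a fixed $\zeta\in\mathbb{C}$, the inequality $v^2\big((a+\zeta b)+\lambda b\big)\geq v^2(a+\zeta b)+|\lambda|^2\mathcal{C}^2(b)$ holding for all $\lambda\in\mathbb{C}$ is equivalent to $(a+\zeta b)\perp_{B}^{v}b$; and by Definition \ref{D.1.2} this in turn means $v(a+\mu b)\geq v(a+\zeta b)$ for every $\mu\in\mathbb{C}$, i.e.\ that $\zeta$ is a global minimizer of the function $f\colon\mathbb{C}\to[0,\infty)$ given by $f(\mu)=v(a+\mu b)$. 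So the whole proposition amounts to showing that $f$ possesses exactly one global minimizer.

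For existence I would argue by compactness. The map $f$ is continuous because $v(\cdot)$ is a norm. It is also coercive: fixing any state $\varphi_{0}$ on $\mathfrak{A}$, for every $\mu\in\mathbb{C}$ we have $f(\mu)\geq|\varphi_{0}(a+\mu b)|\geq|\mu|\,|\varphi_{0}(b)|-|\varphi_{0}(a)|\geq|\mu|\,\mathcal{C}(b)-v(a)$, where we used $|\varphi_{0}(b)|\geq\mathcal{C}(b)$ and $|\varphi_{0}(a)|\leq v(a)$. Since $\mathcal{C}(b)>0$, it follows that $f(\mu)\to\infty$ as $|\mu|\to\infty$, so every sublevel set of $f$ is closed and bounded, hence compact in $\mathbb{C}\cong\mathbb{R}^2$. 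Therefore $f$ attains its infimum at some $\zeta\in\mathbb{C}$, and by the reduction in the first paragraph this $\zeta$ satisfies the required inequality.

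For uniqueness, suppose $\zeta_{1},\zeta_{2}\in\mathbb{C}$ both satisfy the conclusion. Applying the inequality once with $\zeta=\zeta_{1}$ and $\lambda=\zeta_{2}-\zeta_{1}$, and once with $\zeta=\zeta_{2}$ and $\lambda=\zeta_{1}-\zeta_{2}$, and observing that $(a+\zeta_{1}b)+(\zeta_{2}-\zeta_{1})b=a+\zeta_{2}b$ and symmetrically, we obtain $v^2(a+\zeta_{2}b)\geq v^2(a+\zeta_{1}b)+|\zeta_{1}-\zeta_{2}|^2\mathcal{C}^2(b)$ and $v^2(a+\zeta_{1}b)\geq v^2(a+\zeta_{2}b)+|\zeta_{1}-\zeta_{2}|^2\mathcal{C}^2(b)$. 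Adding these two inequalities yields $0\geq 2|\zeta_{1}-\zeta_{2}|^2\mathcal{C}^2(b)$, and since $\mathcal{C}(b)>0$ this forces $\zeta_{1}=\zeta_{2}$.

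The only genuinely substantive point is the coercivity estimate for $f$; this is precisely where the hypothesis $\mathcal{C}(b)>0$ is used, and without it $f$ need not attain (or attain uniquely) its minimum. Everything else — the equivalence with $\perp_{B}^{v}$ through Corollary \ref{C.8.2}, continuity of $f$, and the uniqueness computation — is routine once that estimate is in place.
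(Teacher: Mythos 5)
Your argument is correct and follows essentially the same route as the paper's: existence of $\zeta$ as a global minimizer of $\mu\mapsto v(a+\mu b)$ (which you justify more explicitly via the coercivity estimate $v(a+\mu b)\geq|\mu|\,\mathcal{C}(b)-v(a)$), the identification of minimizers with $(a+\zeta b)\perp_{B}^{v}b$ and hence with the Pythagorean inequality via Corollary~\ref{C.8.2}, and uniqueness by substituting $\lambda=\zeta_1-\zeta_2$. The only cosmetic difference is that your uniqueness step adds the two symmetric inequalities, whereas the paper uses the minimality of $\zeta$ directly; both are fine.
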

\begin{proof}
Since $v(a + \lambda b)$ is large for $|\lambda|$ large,
$\inf\big\{v(a + \lambda b) : \,\lambda \in \mathbb{C}\big\}$ must be attained at some point, say $\zeta$
(there may be of course many such points). Therefore, $v(a + \lambda b) \geq v(a + \zeta b)$ for all
$\lambda \in \mathbb{C}$ and hence $(a + \zeta b)\perp_{B}^{v} b$.
So, by Corollary \ref{C.8.2}, we have
\begin{align*}
v^2\Big((a + \zeta b) + \lambda b\Big)\geq v^2(a + \zeta b) + |\lambda|^2 \mathcal{C}^2(b),
\end{align*}
for all $\lambda \in \mathbb{C}$.
Now, suppose that $\eta$ is another point satisfying the inequality
\begin{align*}
v^2\Big((a + \eta b) + \lambda b\Big)\geq v^2(a + \eta b) + |\lambda|^2 \mathcal{C}^2(b),
\end{align*}
for all  $\lambda \in \mathbb{C}$.
Choose $\lambda = \zeta - \eta$ to get
\begin{align*}
v^2(a + \zeta b) &= v^2\Big((a + \eta b) + (\zeta - \eta)b \Big)
\\& \geq v^2(a + \eta b) + |\zeta - \eta|^2 \mathcal{C}^2(b)
\\& \geq v^2(a + \zeta b) + |\zeta - \eta|^2 \mathcal{C}^2(b).
\end{align*}
Hence $0 \geq |\zeta - \eta|^2 \mathcal{C}^2(b)$. Since $\mathcal{C}(b)> 0$, we get $|\zeta - \eta|^2 = 0$,
or equivalently, $\eta = \zeta$. This shows that $\zeta$ is unique.
\end{proof}
Now we apply the above result to present a theorem concerning
uniqueness of best approximation with respect to the numerical radius norm in $C^*$-algebras.
Similar investigations have been worked out in compact operators spaces for
injective operators (cf. \cite[Theorems 5.6, 5.7, 5.8]{W2}).
\begin{theorem}
Let $b\in \mathfrak{A}$ with $\mathcal{C}(b)>0$.
Then any $a\!\in\!\mathfrak{A}\setminus{\rm span}\{b\}$ has a unique best approximation in
${\rm span}\{b\}$ with respect to the numerical radius norm, that is, there exists a unique $b_a\!\in\!{\rm span}\{b\}$ such that
${\rm dist}(a,{\rm span}\{b\})=v(a-b_a)$.
\end{theorem}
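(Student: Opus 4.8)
The plan is to derive this entirely from Proposition~\ref{P.9.2} and Corollary~\ref{C.8.2}; no new estimate should be required. First note that $\mathcal{C}(b)>0$ forces $b\neq 0$ (otherwise $\mathcal{C}(b)=0$), so ${\rm span}\{b\}=\mathbb{C}b$ is a genuine one-dimensional subspace and
\[
{\rm dist}\big(a,{\rm span}\{b\}\big)=\inf_{\lambda\in\mathbb{C}}v(a+\lambda b).
\]
For \emph{existence} I would either quote the opening lines of the proof of Proposition~\ref{P.9.2} verbatim, or argue directly: since $v(a+\lambda b)\ge |\lambda|\,v(b)-v(a)$ and $v(b)\ge\mathcal{C}(b)>0$, the continuous function $\lambda\mapsto v(a+\lambda b)$ on $\mathbb{C}$ tends to $+\infty$ as $|\lambda|\to\infty$, so its infimum is attained on a suitable closed disk $\{|\lambda|\le R\}$, which is compact. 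Denoting a minimizer by $\zeta$ and setting $b_a:=-\zeta b\in{\rm span}\{b\}$ gives $v(a-b_a)={\rm dist}(a,{\rm span}\{b\})$. Equivalently, one may simply take the $\zeta$ furnished by Proposition~\ref{P.9.2}: the Pythagorean inequality there yields $v\big((a+\zeta b)+\lambda b\big)\ge v(a+\zeta b)$ for every $\lambda$, so $a+\zeta b$ realizes the distance.

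For \emph{uniqueness}, suppose $b_a'=\mu' b\in{\rm span}\{b\}$ is another best approximation, i.e.\ $v(a-\mu'b)={\rm dist}(a,{\rm span}\{b\})$. Then for every $\lambda\in\mathbb{C}$,
\[
v\big((a-\mu'b)+\lambda b\big)=v\big(a+(\lambda-\mu')b\big)\ge{\rm dist}\big(a,{\rm span}\{b\}\big)=v(a-\mu'b),
\]
which says precisely that $(a-\mu'b)\perp_{B}^{v}b$. By Corollary~\ref{C.8.2} this is equivalent to
\[
v^2\big((a-\mu'b)+\lambda b\big)\ge v^2(a-\mu'b)+|\lambda|^2\mathcal{C}^2(b)\qquad(\lambda\in\mathbb{C}),
\]
so $-\mu'$ satisfies the very inequality that characterizes $\zeta$ in Proposition~\ref{P.9.2}. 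The uniqueness clause of that proposition then forces $-\mu'=\zeta$, hence $b_a'=-\zeta b=b_a$.

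The substantive content is already packaged into Proposition~\ref{P.9.2} (existence together with uniqueness of the Pythagorean point) and Corollary~\ref{C.8.2}; the step to be careful with is the elementary but essential observation that \emph{any} best approximant $\mu' b$ makes $a-\mu'b$ numerical-radius Birkhoff--James orthogonal to $b$, which is what allows Corollary~\ref{C.8.2} to convert ``being a minimizer'' into ``satisfying the Pythagorean inequality'' and thence into uniqueness. (The hypothesis $a\notin{\rm span}\{b\}$ is used only to make the conclusion non-vacuous, ensuring ${\rm dist}(a,{\rm span}\{b\})>0$; the argument above does not otherwise need it.)
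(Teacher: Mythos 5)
Your proposal is correct and rests on the same foundation as the paper's proof, namely Proposition~\ref{P.9.2} together with the observation that $\mathcal{C}(b)>0$ makes the Pythagorean inequality effective. The only (harmless) difference is in how uniqueness is extracted: the paper notes that the inequality of Proposition~\ref{P.9.2} is \emph{strict} for $\lambda\neq 0$, so that $v(a-p)>v(a-b_a)$ for every $p\in{\rm span}\{b\}\setminus\{b_a\}$ and uniqueness is immediate, whereas you show that any best approximant yields $\perp_{B}^{v}$, invoke Corollary~\ref{C.8.2}, and then appeal to the uniqueness clause of Proposition~\ref{P.9.2} --- both routes are valid and essentially equivalent.
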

\begin{proof}
Fix $a\!\in\!\mathfrak{A}\setminus{\rm span}\{b\}$. It follows from
Proposition \ref{P.9.2} that there exists a unique $\zeta\!\in\!\mathbb{C}$ such that
$v^2\Big((a + \zeta b) + \lambda b\Big)\geq v^2(a + \zeta b) + |\lambda|^2 \mathcal{C}^2(b)$ for
all $\lambda\!\in\!\mathbb{C}$. If $\lambda\neq 0$, then, by the inequality $\mathcal{C}(b)>0$, we get
the following inequality
\begin{align}\label{best-appr-01}
v\left((a + \zeta b) + \lambda b\right)>v(a + \zeta b)\quad {\rm for\ all}\quad \lambda\!\in\!\mathbb{C}\setminus\{0\}.
\end{align}
Define $b_a:=-\zeta b$. Now the property (\ref{best-appr-01}) becomes
\begin{align*}
v(a - p)>v(a - b_a)\quad {\rm for\ all}\quad p\!\in\!{\rm span}\{b\}\setminus\{b_a\},
\end{align*}
which means that ${\rm dist}(a,{\rm span}\{b\})=v(a-b_a)$.
\end{proof}
In \cite{A.K}, for $A, B\in \mathbb{B}(\mathcal{H})$, a necessary and sufficient condition for
the equality $w(A + B) = w(A) + w(B)$ has been given. In fact, it has been shown
that $w(A + B) = w(A) + w(B)$ if and only if there exists a sequence $\{x_n\}$ of unit
vectors in $\mathcal{H}$ such that
\begin{align*}
\displaystyle{\lim_{n\rightarrow \infty}}
\langle x_n, Ax_n\rangle\langle Bx_n, x_n\rangle = w(A)\,w(B).
\end{align*}
In the following theorem, we give a necessary and sufficient condition
for the equality $v(a + b) = v(a) + v(b)$ in $C^*$-algebras.
\begin{theorem}\label{T.10.2}
Let $a, b \in\mathfrak{A}$.
Then the following statements are equivalent:
\begin{itemize}
\item[(i)] $v(a + b) = v(a) + v(b)$.
\item[(ii)] There exists a state $\varphi$ on $\mathfrak{A}$ such that
$\overline{\varphi(a)} \varphi(b) = v(a)v(b)$.
\end{itemize}
\end{theorem}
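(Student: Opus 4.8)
The plan is to derive both implications directly from two ingredients already available: the Remark, which guarantees that for every $c \in \mathfrak{A}$ some state attains $v(c)$, and the equality case of the triangle inequality in $\mathbb{C}$, namely that $|z + w| = |z| + |w|$ holds if and only if $z\overline{w}$ is a nonnegative real number. Throughout I may assume $v(a) \neq 0 \neq v(b)$, since if $a = 0$ or $b = 0$ (equivalently $v(a) = 0$ or $v(b) = 0$, as $v(\cdot)$ is a norm) both (i) and (ii) hold trivially.

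For (ii)$\Rightarrow$(i), I would start from a state $\varphi$ with $\overline{\varphi(a)}\varphi(b) = v(a)v(b)$. Taking absolute values and using $|\varphi(a)| \leq v(a)$, $|\varphi(b)| \leq v(b)$ forces $|\varphi(a)| = v(a)$ and $|\varphi(b)| = v(b)$; since the product $\overline{\varphi(a)}\varphi(b)$ is a nonnegative real, $\varphi(a)$ and $\varphi(b)$ share the same argument, so $|\varphi(a + b)| = |\varphi(a)| + |\varphi(b)| = v(a) + v(b)$. Hence $v(a+b) \geq v(a) + v(b)$, and the reverse inequality is the triangle inequality for $v(\cdot)$, giving (i).

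For (i)$\Rightarrow$(ii), I would invoke the Remark to pick a state $\varphi$ with $|\varphi(a + b)| = v(a + b)$. The chain
\begin{align*}
v(a) + v(b) = v(a + b) = |\varphi(a) + \varphi(b)| \leq |\varphi(a)| + |\varphi(b)| \leq v(a) + v(b)
\end{align*}
then collapses to equalities, yielding $|\varphi(a)| = v(a)$, $|\varphi(b)| = v(b)$, and $|\varphi(a) + \varphi(b)| = |\varphi(a)| + |\varphi(b)|$. From the last equality, $\varphi(a)\overline{\varphi(b)} \geq 0$, so $\overline{\varphi(a)}\varphi(b) = \overline{\varphi(a)\overline{\varphi(b)}}$ is also a nonnegative real, and its modulus equals $|\varphi(a)|\,|\varphi(b)| = v(a)v(b)$; therefore $\overline{\varphi(a)}\varphi(b) = v(a)v(b)$, which is (ii).

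No step here is genuinely hard; the only points needing care are the degenerate cases $a = 0$ or $b = 0$ and the precise phase bookkeeping in the equality case of the triangle inequality, both of which are routine.
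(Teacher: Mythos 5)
Your proof is correct, and the forward direction takes a genuinely different route from the paper. For (ii)$\Rightarrow$(i) the two arguments coincide: expand $|\varphi(a+b)|^2$ and squeeze. For (i)$\Rightarrow$(ii), however, the paper first invokes Proposition~4.1 of \cite{A.R.2} to convert the norm equality into the orthogonality relation $a\perp_{B}^{v}\big(v(a)b-v(b)a\big)$, then applies Theorem~\ref{T.3.2} to produce a state $\varphi$ with $|\varphi(a)|=v(a)$ and ${\rm Re}\big(\overline{\varphi(a)}\varphi(v(a)b-v(b)a)\big)\geq 0$, and finally unwinds this to get $\overline{\varphi(a)}\varphi(b)=v(a)v(b)$. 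You instead apply the Remark directly to $a+b$ to obtain a state attaining $v(a+b)$, and let the chain
\begin{equation*}
v(a)+v(b)=v(a+b)=|\varphi(a)+\varphi(b)|\leq|\varphi(a)|+|\varphi(b)|\leq v(a)+v(b)
\end{equation*}
collapse, after which the equality case of the triangle inequality in $\mathbb{C}$ gives the phase alignment. Your argument is shorter, entirely self-contained (it needs only the weak*-compactness argument of the Remark and elementary complex arithmetic), and sidesteps both the external reference and the orthogonality machinery; the paper's version, by contrast, showcases how Theorem~\ref{T.3.2} and the orthogonality framework built earlier can be deployed, which fits the expository arc of the article. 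Your handling of the degenerate cases $v(a)=0$ or $v(b)=0$ is careful but in fact unnecessary, since the equality case $|z+w|=|z|+|w|\Leftrightarrow z\overline{w}\geq 0$ remains valid when one of $z,w$ vanishes; either way the proof stands.
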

\begin{proof}
(i)$\Rightarrow$(ii) Let $v(a + b) = v(a) + v(b)$.
By Proposition 4.1 of \cite{A.R.2}, we get $a\perp_{B}^{v} \big(v(a)b - v(b)a\big)$.
Therefore, by Theorem \ref{T.3.2},
there exists a state $\varphi$ on $\mathfrak{A}$ such that
$|\varphi(a)| = v(a)$ and $\mbox{Re}\Big(\overline{\varphi(a)}\varphi\big(v(a)b - v(b)a\big)\Big) \geq 0$.
This implies $v(a)v(b) \leq \mbox{Re}\big(\overline{\varphi(a)}\varphi(b)\big)$.
Consequently,
\begin{align*}
v(a)v(b) \leq \mbox{Re}\big(\overline{\varphi(a)}\varphi(b)\big)
\leq \big|\overline{\varphi(a)}\varphi(b)\big| \leq v(a)v(b),
\end{align*}
which yields $\mbox{Re}\big(\overline{\varphi(a)}\varphi(b)\big) = v(a)v(b)$
and $\mbox{Im}\big(\overline{\varphi(a)}\varphi(b)\big) = 0$.
Hence $\overline{\varphi(a)} \varphi(b) = v(a)v(b)$.

(ii)$\Rightarrow$(i) Suppose (ii) holds. So, there exists a state $\varphi$ on $\mathfrak{A}$ such that
$\overline{\varphi(a)} \varphi(b) = v(a)v(b)$. From this it follows that
$|\varphi(a)| = v(a)$ and $|\varphi(b)| = v(b)$. Therefore, we have
\begin{align*}
\big(v(a) + v(b)\big)^2 &= |\varphi(a)|^2 + 2\overline{\varphi(a)} \varphi(b) + |\varphi(b)|^2
\\&= |\varphi(a + b)|^2 \leq v^2(a + b) \leq \big(v(a) + v(b)\big)^2,
\end{align*}
and so $v(a + b) = v(a) + v(b)$.
\end{proof}
Applying the above result we may prove another theorem.
\begin{theorem}
Let $a, b,c \in\mathfrak{A}\setminus\{0\}$.
Then the following statements are equivalent:
\begin{itemize}
\item[(i)] $v(a + b+c) = v(a) + v(b)+v(c)$.
\item[(ii)] There exists a state $\varphi$ on $\mathfrak{A}$ such
that $\frac{\varphi(a)}{v(a)}\!=\!\frac{\varphi(b)}{v(b)}\!=\!\frac{\varphi(c)}{v(c)}$ and\\
$\overline{\varphi(a)} \varphi(b)\! =\! v(a)v(b)$,\quad
$\overline{\varphi(a)} \varphi(c)\! =\! v(a)v(c)$,\quad
$\overline{\varphi(b)} \varphi(c)\! =\! v(b)v(c)$.
\item[(iii)] There exists a state $\psi$ on $\mathfrak{A}$ such
that $\frac{\psi(a)}{v(a)}\!=\!\frac{\psi(b)}{v(b)}\!=\!\frac{\psi(c)}{v(c)}$ and
$\left|\frac{\psi(a)}{v(a)}\right|\!=\!1$.
\end{itemize}
\end{theorem}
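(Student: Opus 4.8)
The plan is to prove the cycle $(i)\Rightarrow(iii)\Rightarrow(ii)\Rightarrow(i)$; the first implication contains all the substance, the other two being one-line computations. Since $a,b,c\neq 0$, we have $v(a),v(b),v(c)>0$, so every quotient appearing in $(ii)$ and $(iii)$ is meaningful.

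For $(i)\Rightarrow(iii)$, I would first split off one summand. From $v(a)+v(b)+v(c)=v(a+b+c)\leq v(a)+v(b+c)\leq v(a)+v(b)+v(c)$ we read off $v\big(a+(b+c)\big)=v(a)+v(b+c)$ and $v(b+c)=v(b)+v(c)$. Theorem~\ref{T.10.2} applied to the pair $a,\,b+c$ then produces a state $\varphi$ with $\overline{\varphi(a)}\varphi(b+c)=v(a)v(b+c)$; as in the proof of Theorem~\ref{T.10.2} this forces $|\varphi(a)|=v(a)$ and $|\varphi(b+c)|=v(b+c)=v(b)+v(c)$. The chain $v(b)+v(c)=|\varphi(b)+\varphi(c)|\leq|\varphi(b)|+|\varphi(c)|\leq v(b)+v(c)$ now collapses, yielding $|\varphi(b)|=v(b)$, $|\varphi(c)|=v(c)$ and $|\varphi(b)+\varphi(c)|=|\varphi(b)|+|\varphi(c)|$. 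As $\varphi(b),\varphi(c)\neq 0$, the last equality forces $\varphi(b)$ and $\varphi(c)$ to have a common argument, so $\nu:=\frac{\varphi(b)}{v(b)}=\frac{\varphi(c)}{v(c)}$ satisfies $|\nu|=1$ and $\varphi(b+c)=(v(b)+v(c))\nu$. Substituting this into $\overline{\varphi(a)}\varphi(b+c)=v(a)(v(b)+v(c))$ and cancelling $v(b)+v(c)>0$ gives $\overline{\varphi(a)}\nu=v(a)$, hence $\varphi(a)=v(a)\nu$. Taking $\psi:=\varphi$ we get $\frac{\psi(a)}{v(a)}=\frac{\psi(b)}{v(b)}=\frac{\psi(c)}{v(c)}=\nu$ with $|\nu|=1$, which is $(iii)$.

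The two remaining implications are immediate. For $(iii)\Rightarrow(ii)$, with $\psi$ from $(iii)$ put $\mu:=\frac{\psi(a)}{v(a)}=\frac{\psi(b)}{v(b)}=\frac{\psi(c)}{v(c)}$, $|\mu|=1$; then $\overline{\psi(a)}\psi(b)=|\mu|^2v(a)v(b)=v(a)v(b)$ and likewise for the other two pairs, so $\varphi:=\psi$ witnesses $(ii)$. For $(ii)\Rightarrow(i)$, with $\varphi$ from $(ii)$ and $\mu:=\frac{\varphi(a)}{v(a)}=\frac{\varphi(b)}{v(b)}=\frac{\varphi(c)}{v(c)}$, the equation $\overline{\varphi(a)}\varphi(b)=v(a)v(b)$ reads $|\mu|^2v(a)v(b)=v(a)v(b)$, so $|\mu|=1$; then $\varphi(a+b+c)=\mu\big(v(a)+v(b)+v(c)\big)$ gives $v(a+b+c)\geq|\varphi(a+b+c)|=v(a)+v(b)+v(c)$, and the reverse inequality is the triangle inequality for $v(\cdot)$.

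The only delicate point is the equality case of the triangle inequality inside $(i)\Rightarrow(iii)$: recognising that $|\varphi(b)+\varphi(c)|=|\varphi(b)|+|\varphi(c)|$ with $\varphi(b),\varphi(c)\neq 0$ forces a common phase, and then transporting that phase onto $\varphi(a)$ via the positivity of $\overline{\varphi(a)}\varphi(b+c)$. Everything else is bookkeeping, and the same argument clearly extends to $n$ summands by peeling them off one at a time.
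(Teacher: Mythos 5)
Your proof is correct and follows essentially the same route as the paper's: both peel off $a$ via $v(a+b+c)\le v(a)+v(b+c)$, invoke Theorem \ref{T.10.2} for the pair $a$ and $b+c$, and then align the phases of $\varphi(a)$, $\varphi(b)$, $\varphi(c)$ using strict convexity of the unit disc. The only cosmetic differences are the direction of the cycle of implications and that you phrase the phase alignment as the equality case of $|\varphi(b)+\varphi(c)|\le|\varphi(b)|+|\varphi(c)|$ rather than as a convex combination of unimodular numbers; your version is in fact slightly more explicit about why $|\varphi(b)|=v(b)$ and $|\varphi(c)|=v(c)$.
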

\begin{proof}
(i)$\Rightarrow$(ii) It is known that the norm
equality $v(a + b+c) = v(a) + v(b)+v(c)$ holds if and only
if $v(\alpha a + \beta b+\gamma c) = v(\alpha a) + v(\beta b)+v(\gamma c)$ for all $\alpha,\beta,\gamma\geq 0$.
We assume (i), so without loss of generality, we may assume that $v(a)=v(b)=v(c)=1$ and $v(a + b+c)=3$.
Since
\begin{center}
$3=v(a + b+c)\leq v(a) + v(b+c)\leq v(a) + v(b)+v(c)=3$,
\end{center}
we have $v(a+(b+c))=v(a)+v(b+c)$ and $v(b+c)=2$.
By Theorem \ref{T.10.2}, there is a state $\varphi$ on $\mathfrak{A}$ such
that $\overline{\varphi(a)}\varphi(b+c)=v(a)v(b+c)=2$. It follows
that $\frac{1}{2}\overline{\varphi(a)}\varphi(b)+\frac{1}{2}\overline{\varphi(a)}\varphi(c)=1$.

We know that those three numbers
$\overline{\varphi(a)}\varphi(b)$, $\overline{\varphi(a)}\varphi(c)$, $1$ are
in $\{\xi\!\in\!\mathbb{C}:|\xi|\!=\!1\}$. Since one of them is a
convex combination of the others, they must all be the same scalar. Therefore
$\overline{\varphi(a)}\varphi(b)=\overline{\varphi(a)}\varphi(c)=1$.
It follows easily that $\varphi(a)\overline{\varphi(b)}=1$. Multiplying these equalities we have
$\varphi(a)\overline{\varphi(b)}\overline{\varphi(a)}\varphi(c)=1$. Since
$\varphi(a)\overline{\varphi(a)}=1$, we get $\overline{\varphi(b)} \varphi(c)\! =\! 1$.
To summarize, it has been shown that $\overline{\varphi(a)} \varphi(b)\! =\!1$,
$\overline{\varphi(a)} \varphi(c)\! =\! 1$, $\overline{\varphi(b)} \varphi(c)\! =\! 1$.
If we divide both sides of the equality $\overline{\varphi(a)}\varphi(b)=\overline{\varphi(a)}\varphi(c)$
by $\overline{\varphi(a)}$, we obtain $\varphi(b)=\varphi(c)$. Similarly,
since $\varphi(a)\overline{\varphi(c)}=\varphi(b)\overline{\varphi(c)}$, we
get $\varphi(a)=\varphi(b)$. The proof of the implications (i)$\Rightarrow$(ii) is complete.

The implication (ii)$\Rightarrow$(iii) is trivial. So we prove (iii)$\Rightarrow$(i).
Assume that (iii) holds. Again, we may assume that $v(a)\!=\!v(b)\!=\!v(c)\!=\!1$.
It follows from (iii) that $|\psi(a)|\!=\!|\psi(b)|\!=\!|\psi(c)|=1$. Further, from the
condition (iii) we have
\begin{align*}
3&=3|\psi(a)|=|\psi(a)+\psi(a)+\psi(a)|=|\psi(a)+\psi(b)+\psi(c)|\\
&=|\psi(a+b+c)|\leq v(a+b+c)\leq v(a)+v(b)+v(c)=3.
\end{align*}
So, the inequalities become equalities and the proof is complete.
\end{proof}
It is worth mentioning that investigations with more than two elements
have been appeared in \cite{W2}, but for sum of operators.
\begin{remark}\label{R.11.2}
In \cite[Theorem 2.2]{Z.1}, the following
characterization of the numerical radius for elements of a $C^*$-algebra has been given,
\begin{align*}
v(a) = \displaystyle{\sup_{\theta \in \mathbb{R}}}\|\mbox{Re}(e^{i\theta}a)\|.
\end{align*}
Then, a refinement of the triangle inequality
for the numerical radius in $C^*$-algebras has been shown in \cite[Theorem 3.6]{Z.1}
that for every $a, b \in\mathfrak{A}$,
\begin{align}\label{I.1.R.11.2}
v(a + b) &\leq \frac{1}{2} \big(v(a) + v(b)\big)\nonumber
\\& \qquad + \frac{1}{2}\sqrt{\big(v(a) - v(b)\big)^2 +
4 \sup_{\theta \in \mathbb{R}}\big\|\mbox{Re}(e^{i\theta}a)\mbox{Re}(e^{i\theta} b)\big\|}
\\& \leq v(a) + v(b)\nonumber.
\end{align}
Therefore, Theorem \ref{T.10.2} implies that, for every $a, b \in\mathfrak{A}$,
if there exists a state $\varphi$ on $\mathfrak{A}$ such that
$\overline{\varphi(a)} \varphi(b) = v(a)v(b)$, then $v(a + b) = v(a) + v(b)$
and consequently by (\ref{I.1.R.11.2}) we get
\begin{align*}
\displaystyle{\sup_{\theta \in \mathbb{R}}}\big\|\mbox{Re}(e^{i\theta}a)\mbox{Re}(e^{i\theta}b)\big\|
= v(a)v(b) = \overline{\varphi(a)} \varphi(b).
\end{align*}
\end{remark}
\textbf{Acknowledgement.}
The authors would like to thank the referees for their valuable suggestions and comments.
The research of Pawe\l{} W\'ojcik and this paper were
partially supported by National Science
Centre, Poland under Grant Miniatura~2, No. 2018/02/X/ST1/00313.
\bibliographystyle{amsplain}

\end{document}